\newcommand{\beq}{\begin{equation}}
\newcommand{\eeq}{\end{equation}}
\newcommand{\beqs}{\begin{equation*}}
\newcommand{\eeqs}{\end{equation*}}
\newcommand{\ba}{\begin{array}}
\newcommand{\ea}{\end{array}}
\newcommand{\beas}{\begin{eqnarray*}}
\newcommand{\eeas}{\end{eqnarray*}}
\newcommand{\bea}{\begin{eqnarray}}
\newcommand{\eea}{\end{eqnarray}}
\newcommand{\bal}{\begin{align}}
\newcommand{\eal}{\end{align}}
\newcommand{\bals}{\begin{align*}}
\newcommand{\eals}{\end{align*}}
\newcommand{\R}{\ensuremath{\mathbb R}}
\newcommand{\bds}{\begin{displaystyle}}
\newcommand{\eds}{\end{displaystyle}}
\renewcommand{\eqref}[1]{(\ref{#1})}
\def\longequals{\mathbin{=\kern-2pt=}}
\newcommand{\remove}[1]{} %-- ON
\renewcommand{\remove}[1]{#1} % OFF
\newtheorem{theorem}{Theorem}[section]
\newtheorem{lemma}[theorem]{Lemma}
\newtheorem{proposition}[theorem]{Proposition}
\newtheorem{definition}[theorem]{Definition}
\newtheorem{remark}[theorem]{\bf{Remark}}
\newtheorem{example}[theorem]{\bf{Example}}
\title{Geometric Methods in the Analysis of Non-linear Flows in Porous Media}
\author{Eugenio Aulisa, Akif Ibragimov and Magdalena Toda}
\address{Department of Mathematics and Statistics, Texas Tech University, Box 41042, Lubbock, TX 79409--1042, U. S. A.}
\email{eugenio.aulisa@ttu.edu}
\email{akif.ibraguimov@ttu.edu}
\email{magda.toda@ttu.edu}
\begin{document}
\maketitle

\begin{abstract}
Over the past few years, we developed a mathematically
rigorous method to study the dynamical processes associated to nonlinear Forchheimer flows for slightly compressible fluids.
We have proved the existence of a geometric transformation which relates constant mean curvature surfaces and time-invariant pressure distribution graphs constrained by the Darcy-Forchheimer law. We therein established a direct relationship between the CMC graph equation and a certain family of equations which we call $g$-Forchheimer equations. The corresponding results, on fast flows and their geometric interpretation, can be used as analytical tools in evaluating important technological parameters in reservoir engineering.
\end{abstract}
%
% \begin{keyword}
% % keywords here, in the form: keyword \sep keyword
% Porous Media, Nonlinear Forchheimer flow, CMC Surface, Productivity Index.
% % PACS codes here, in the form: \PACS code \sep code
% \PACS 47.56.+r, 02.60.Lj, 02.40.-k.
% \end{keyword}
% \end{frontmatter}

\section{Background}
The present report uses methods of differential geometry and integrable systems in
 modeling nonlinear flows in porous media. The classical
flow, as defined by Darcy's equation \cite{Forch,Muskat,Bear,Dake}
represents a linear relation between the gradient of pressure and velocity
field. Darcy's formulation of the momentum equation of motion essentially
simplifies some complicated hydrodynamics in porous media, by using a
permeability tensor as the major characteristic of the media at a point (pore, block, layer, etc)
in space.

Most of the up-scaling techniques in heterogeneous reservoirs  are based on the
assumption that on each scaling level, the constitutive momentum relation
between the gradient of pressure and velocity field remains invariant, while
the only parameter that is reevaluated is the permeability tensor.

Philipp Forchheimer, one of the founders of groundwater hydrology at the beginning of the 20th century,
observed that for high velocity rates, Darcy's law is no longer valid. On the other hand,
on a fine level, it is well-known that even slow flows can deviate from the linear ones \cite{Muskat}.

Some newer theories suggest that it is precisely the inertial term that
causes the deviation from the linear Darcy law, but the nature of
these inertial forces is not fully understood. As
it was pointed by Bear, \cite{Bear} (see also \cite{Whitaker,Tavera,Miskimins}): ``Most of the experiments indicate
that the actual turbulence occurs at a Reynolds number at least one order of
magnitude higher than the Reynolds value at which the deviation from
Darcy's law is observed''. In some of the experiments, a deviation from
Darcy's law was observed for Reynolds number $Re=qd/\mu \simeq 10$ (where $d$
is some length dimension of the porous matrix, $q$ a specific discharge per
unit cross-sectional area normal to the direction of the flow, and $\mu $ is
viscosity of the liquid). In some recent works, such as
\cite{Tavera}, it was
experimentally observed that Darcy's law is not verified even for $Re\simeq
1$, for samples of the porous rocks containing fracture. Latest results
suggest that even some low velocity flows in a highly heterogeneous
reservoir may deviate from Darcy's linearity principle.

In reservoir engineering, there are three most popular non-linear
approximations of the field data, establishing a formula for the pressure drop $\Delta P$ in terms of the production rate $Q$:
\begin{itemize}
\item[-] the ``two term'' law - $AQ+BQ^2=\Delta P$;
\item[-] the ``power'' law - $CQ^n+aQ=\Delta P,\;\;\;\;\;1\leqslant n\leqslant 2$;
\item[-] the ``three term'' cubic law - $AQ+BQ^2+CQ^3=\Delta P$.
\end{itemize}
All three equations were originally introduced by Forchheimer in works published at the beginning of the 20th 
century.

Following \cite {Whitaker,SIAM, MF, AIVW3} and
references herein we have established three types of generalized nonlinear
Darcy law, with permeability tensor depending on the gradient of the pressure
function. We will show that these constitutive equations correspond to a generalized
Forchheimer equation. Under some assumption about fluids, the
generalized Darcy-Forchheimer equations enable a reduction of the system of equations
that governs the flow - namely to one parabolic non-linear equation for the
pressure function. This parabolic equation displays some similarity to the
constant mean curvature (CMC-graph) equation for surfaces. In this work we use
this similarity to find a constraint on the Forchheimer flows, such that the
pressure function can be regarded as a surface with given constant mean curvature,
after a certain geometric transformation. Conversely, a
graph with prescribed constant mean curvature can be interpreted as a pressure
distribution of the flow subjected to a nonlinear Darcy law, similar to the
Forchheimer equation. This geometric interpretation provided us a with a simple
algorithm to compute the productivity index of the well, in a
structured heterogeneous reservoir. The productivity index (\textit{PI}) of the well is
one of the fundamental concepts in reservoir engineering, defined as
the ratio between the production rate and the difference between the well
pressure and the average pressure in the reservoir. The \textit{PI} characterizes the well
performance with respect to the geometry of the hydraulic system. It is shown that, {\it for some specific condition
that is reasonable to either impose or approximate}, the productivity index of the well in
the inhomogeneous reservoir can be computed using a solution of a corresponding CMC graph
equation.

\section{Introduction.}

%=================================================================%
\subsection{General Forchheimer equations}\label{gForch}
%=================================================================%
Darcy's law (for viscous fluid laminar flows) assumes that the total discharge is equal to the product between the medium permeability, the cross-sectional area of the flow, and the pressure drop, divided by the dynamic viscosity.
After dividing both sides of this equation by the area, one obtains another way to express Darcy's law: namely that the filtration velocity (or Darcy flux) is proportional to the pressure gradient, through a certain permeability coefficient. Darcy's equation, the continuity equation and the equation of state serve as the framework to model processes in reservoirs \cite{Muskat,Dake}.
For a slightly compressible fluid, the original PDE system reduces to a scalar
linear second order parabolic equation for the pressure only. The pressure
function is a major feature of the oil or gas filtration in porous
media, which is bounded by the well surface and the exterior reservoir boundary.
Different boundary conditions on the well correspond to different regimes of
production, while the condition on the exterior boundary models flux or
absence of flux into the drainage area. All together, the linear parabolic
equation, boundary conditions and some assumptions or guesses about the
initial pressure distribution form an IBVP.

There are different approaches for modeling non-Darcy's phenomena \cite{ELLP, Forch, Whitaker, Payne, RajTao}. It can be derived from the more general Brinkman-Forchheimer's equation \cite{Payne,ChadamQin}, or from mixture theory assuming certain relations between velocity field and ``drag-like" forces due to fluid to solid friction in the porous media \cite{Raj}. It can be also derived using homogenization arguments \cite{Sanchez}, or assuming some functional relation and then match the experimental data. In the current report, we just postulate a general constitutive equation relating the velocity vector field and the pressure gradient.
We will introduce constraints on the momentum equation and on the fluid density.
This will allow the reduction of the original system to a scalar quasi-linear parabolic equation
for the pressure only.

Hereafter, we use the following notations and basic definitions:

\begin{itemize}
\item  $v(x,t)$ represents the velocity field; $x$ is the spatial variable in $\R^d, d=2$ or $3$; $t$ denotes time;
$p(x,t)$ is the pressure distribution; $y\in \R^d$ are the variable vectors related to $\nabla p$; $s$, $\xi$
represent scalar variables;
\item The notations $C,C_0,C_1,C_2,\ldots$ denote generic positive constants not depending
on the solutions.
\end{itemize}

Current studies of flows in porous media widely use the three specific Forchheimer laws which we already mentioned (two-term, power, and three-term, respectively).
Darcy and Forchheimer laws can be written in vector forms as follows:
\begin{itemize}
\item {\bf Darcy's law:}
\beq\label{Darcy}
 \alpha v=-\nabla p,
\eeq
where $\displaystyle \alpha =\frac{\mu }{k}$ with $k$, in general, represents the permeability non-homogeneous function depending on $x$ subjected to the condition: $k_2^{-1} \ge\ k \ge\ k_2$,  $1 \ge k_2 > 0$. Here, the constant $\mu$ is the viscosity of the fluid.

\item {\bf The Forchheimer two-term law:}
\beq\label{F1}
 \alpha v+\beta |v|v=-\nabla p,
\eeq
where $\displaystyle \beta =\frac{\rho F\Phi}{k^{1/2}}$, $F$ is Forchheimer's coefficient, $\Phi$ is the porosity, and $\rho$ is the density of the fluid.

\item {\bf The Forchheimer power law:}
\beq\label{F2}
a v + c^n{|v|^{n-1}} v=-\nabla p,
\eeq
where $n$ is a real number belonging to the interval $[1,2]$. The strictly positive and bounded functions $c$ and $a$ are  found empirically, or can be taken as $c=(n-1)\sqrt{\beta}$ and $a=\alpha$.
By this way, $n=1$ and $n=2$ reduce the power law (\ref{F2}) to Darcy's law and to the Forchheimer two-term law, respectively.

\item {\bf The Forchheimer three-term law:}
 \beq\label{F3}
\mathcal{A} {v}+\mathcal{B}\, |v|v+\mathcal{C} |v|^2v=- \nabla p.
\eeq
Here $\mathcal{A}, \mathcal{B},$ and $\mathcal{C}$ are empirical constants.
\end{itemize}

We now introduce a {\it general form for the Forchheimer equations}.

%=================================================================%
\begin{definition}[$g$-Forchheimer Equations]\label{defgForch}
\beq\label{dafo-g} g(x,|v|)\, v=-\nabla p,\eeq
here $g(x,s)>0$ for all $s\ge 0$.
We will refer to \eqref{dafo-g} as $g$-Forchheimer (momentum) equations.
\end{definition}
%=================================================================%
Under isothermal condition the state equation relates the density $\rho$ to the pressure $p$ only, i.e.~ $\rho=\rho(p)$.
Therefore the equation of state has the form:
\beq \frac{1}{\rho}\frac{d \rho}{d p}=\frac1\kappa, \label{eqs} \eeq
where $1/\kappa$ is the compressibility of the fluid.
For slightly compressible fluid, such as compressible liquid, the compressibility is independent of pressure and is very small, hence  we obtain
\beq \rho=\rho_0 \exp\left(\frac{p-p_0}{\kappa}\right),\eeq
where $\rho_0$ is the density at the reference pressure $p_0$ (see \cite{Bear} Sec.~2.3, and also \cite{Muskat} Sec.~3.4).
Substituting Eq. \eqref{eqs} in the continuity equation
\beq \frac{d\rho}{dt}=-\nabla\cdot(\rho v),\eeq
yields
$$\frac{d\rho}{dp}\frac{dp}{dt}=-\rho\nabla\cdot v - \frac{d\rho}{dp} v\cdot\nabla p,$$
\beq\label{dafo-nonlin} \frac{dp}{dt}=-\kappa\nabla\cdot v - v\cdot\nabla p.\eeq
Since for most slightly compressible fluids in porous media the value of the constant $\kappa$ is large, following engineering tradition we drop the last term in \eqref{dafo-nonlin} and study the reduced equation:% neglecting second term in continuity equation
\beq\label{lin-p} \frac{dp}{dt}=-\kappa \nabla\cdot v\,.\eeq

%=================================================================%
\subsection{Boundary conditions}\label{IBVPsection}
%=================================================================%

Let $U\subset \R^d$ be a $C^1$ domain modeling the drainage area in the porous media (reservoir), bounded by two boundaries: the exterior boundary $\Gamma_e$, and the accessible boundary $\Gamma_i$.

The exterior boundary $\Gamma_e$ models the geometrical limit of the well impact on the flow filtration and is often considered impermeable.
This yields the boundary condition:
\beq v\cdot N|_{\Gamma_e}=0,\eeq
where $N$ is the outward normal vector on the boundary $\Gamma=\Gamma_i\cup \Gamma_e$.
Other types of boundary conditions on the exterior boundary are discussed in \cite{SIAM}.

The accessible boundary $\Gamma_i$ models the well and defines the regime of filtration inside the domain.
On $\Gamma_i$, consider a given rate of production $Q(t)$, or a given pressure value $p=\varphi(x,t)$, or a combination of both.
It is very important from a practical point of view to build some ``baseline'' solutions capturing significant features of the well capacity and analyze the impact of the boundary conditions on these solutions. This analysis will be used to forecast the well performances and tune the model to the actual data.

On the boundary $\Gamma_i$, a ``split'' condition  of the following type is of particular interest:
\beq\label{S-BC} p=\psi(x,t)=\gamma(t)+\varphi(x),\eeq
where the time and space dependence of $p$ are separated.
This type of condition models wells which have conductivity much higher than the conductivity inside the reservoir.
The limiting homogeneous case $\psi(x)=const$ corresponds to the case of infinite conductivity on the well.

In case the flow is controlled by a given production rate $Q(t)$, the  solution is not unique.

Two important cases are:

(a) pressure distribution of the form $-At+\varphi(x)$, and

(b) constant total flux $ Q=const.$

The particular solutions of IBVP with boundary conditions (a) and (b) are ``time-invariant'' (see Section \ref{PSS}) and are used actively by engineers in their practical work.

%=================================================================%
\section{Non-Linear Darcy Equation and Monotonicity properties}
\label{Monotonicity}
%=================================================================%

In order to simplify the notation, we will further omit the $x$-dependence of $g$ in (\ref{dafo-g}).
Thus, one has
\beq\label{fluid-assumption} g(|v|)=g(x,|v|).\eeq
From \eqref{dafo-g} one has
\beq\label{G-eqn} g(|v|)|v|=-|\nabla p|,
\quad\hbox{for}\quad s\ge 0.\eeq

To make sure one can solve \eqref{G-eqn} for $|v|$, we impose the following conditions:
function $g$ belongs to $C([0,\infty))$ and $C^1((0,\infty))$, and satisfies	
\beq\label{G-cond} g(0)>0,
  \quad\hbox{and}\quad g'(s)\ge 0 \hbox{ for all } s> 0.
\eeq

Under this condition one has $\big(s g(s)\big)'=sg'(s) + g(s)\ge g(0)>0$, for any positive value $s$.
Therefore function $sg(s)$ is monotone and one can find $|v|$ as a function of $|\nabla p|$
\beq\label{absu} |v|=G(|\nabla p|)\eeq

Substituting equation \eqref{absu} into \eqref{dafo-g} one obtains the following alternative form of the
$g$-Forchheimer momentum equation \eqref{dafo-g}:
 \begin{definition}{(Non-linear Darcy Equation)} \cite{MF}
\beq\label{u-forma} v=\frac{-\nabla p}{g(G(|\nabla p|))} = -K(|\nabla p|)\nabla p,\eeq
where the function $K:[0,\infty)\to[0,\infty)$ is defined by
\beq\label{Kdef} K(\xi)=K_g(\xi)=\frac1{g(G(\xi))},\quad \xi\ge 0.\eeq
\end{definition}

Substituting \eqref{u-forma} for $u$ into \eqref{lin-p} one derives the degenerate parabolic equation
for the pressure:
\beq\label{lin-p-1} \frac{dp}{dt}=\nabla \cdot (K(|\nabla p|)\nabla p)\,.\eeq
where the compressibility constant $\kappa$ has been included in the non linear coefficient $K(|\nabla p|)$.

Some properties of the function $K$ can be found in \cite{MF}.
It turns our that the function $y\to K(|y|)y$ associated with the non-linear potential field on  the  RHS of equation \eqref{u-forma} is monotonic. This monotonicity and related properties are crucial in the study of the uniqueness and qualitative behavior of the the solutions of initial value problems (see e.g.~\cite{Evans}).

We illustrate the function $K$ for the particular case of two-term Forchheimer's equation.
This is one of the few cases when the function $K$ can be found explicitly.
%%%%%%%%%%%%%%%%%%%%%%%%%%%%%%%%%%%%%%%%%%%%%%%%%%%%%%%%%%%%%%%%%%%%%%%%%%%%%%%%
\begin{example} \label{2termForchheimer}
 For the Forchheimer two-term law \eqref{F1}, let $g(s)=\alpha+\beta s$, then one has
$s g(s)=\beta  s^2 +\alpha s$ and
$s=G(\xi)=\frac{-\alpha+\sqrt{\alpha^2+4\beta \xi}}{2\beta}.$
Thus
\beqs K(\xi)=\frac1{\alpha+\beta G(\xi)}=\frac{2}{\alpha+\sqrt{\alpha^2+4\beta \xi}}.\eeqs
\end{example}
%%%%%%%%%%%%%%%%%%%%%%%%%%%%%%%%%%%%%%%%%%%%%%%%%%%%%%%%%%%%%%%%%%%%%%%%%%%%%%%%

We now introduce the notion of generalized polynomial with positive coefficients and positive exponents, abbreviated as GPPC,
as it is a useful tool in this study.

\begin{definition}\label{GPPC}
We say that a function $g(s)$ is a GPPC if
\beq\label{gdef} g(s)=a_0 s^{\alpha_0} + a_1s^{\alpha_1}+a_2s^{\alpha_2}+\ldots +a_ks^{\alpha_k}=\sum_{j=0}^k a_j s^{\alpha_j},\eeq
where $k\ge 0$ represents a natural number, $0=\alpha_0<\alpha_1<\alpha_2<\ldots<\alpha_k$ represent real values, and the coefficients $a_0,a_1,\ldots, a_k$ are real and positive.
The largest exponent $\alpha_k$ is the degree of $g$ and is denoted by $\deg(g)$.

Class (GPPC) is defined as the collection of all GPPC.
\end{definition}
%%%%%%%%%%%%%%%%%%%%%%%%%%%%%%%%%%%%%%%%%%%%%%%%%%%%%%%%%%%%%%%%%%%%%%%%%%%%%%%%

If the function $g$ in Definition~\ref{defgForch}  belongs to class (GPPC) then we call it the $g$-Forchheimer polynomial.

%%%%%%%%%%%%%%%%%%%%%%%%%%%%%%%%%%%%%%%%%%%%%%%%%%%%%%%%%%%%%%%%%%%%%%%%%%%%%%%%
\begin{lemma}\label{K-estimate}
Let $g(s)$ be a function of class (GPPC)  as in \eqref{gdef}. Then $K(\xi)=K_g(\xi)$ is well-defined, is decreasing and satisfies
\beq\label{Kbound}   \frac{C_0}{1+\xi^{a}}\le K(\xi) \le \frac{C_1}{1+\xi^{a}} ,\;\forall \xi\ge 0,\eeq
where $a=\alpha_k/(\alpha_k+1)\in [0,1)$, and $C_0$ and $C_1$ are positive numbers depending on $a_j$'s and $\alpha_j$'s.
\end{lemma}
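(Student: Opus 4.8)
The plan is to derive all three claims from two-sided \emph{monomial} comparisons for the GPPC $g$ and for the map $s\mapsto sg(s)$ that is inverted to define $G$. Well-definedness and monotonicity come almost for free: since $\alpha_0=0$ one has $g(0)=a_0>0$, and $g'(s)=\sum_{j=1}^k a_j\alpha_j s^{\alpha_j-1}\ge 0$ on $(0,\infty)$ because each $a_j,\alpha_j>0$, so $g$ satisfies \eqref{G-cond}. Hence, as already observed below \eqref{G-cond}, $s\mapsto sg(s)$ is a strictly increasing bijection of $[0,\infty)$, so $G$ and therefore $K=1/(g\circ G)$ are well-defined; and since $g$ is increasing and $G$ is increasing, $g(G(\xi))$ is increasing and $K$ is decreasing.

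For the quantitative bound \eqref{Kbound} I would first trap $g$ between multiples of $1+s^{\alpha_k}$. The lower estimate uses only the two extreme terms, $g(s)\ge a_0$ and $g(s)\ge a_k s^{\alpha_k}$, whence $g(s)\ge \tfrac12\min(a_0,a_k)(1+s^{\alpha_k})$. The upper estimate uses the elementary bound $s^{\alpha_j}\le 1+s^{\alpha_k}$ for $0\le\alpha_j\le\alpha_k$ (check separately for $s\le 1$ and $s\ge 1$), giving $g(s)\le\big(\sum_{j=0}^k a_j\big)(1+s^{\alpha_k})$. Multiplying by $s$ propagates these to $\tfrac12\min(a_0,a_k)\,(s+s^{\alpha_k+1})\le \xi\le \big(\sum_{j=0}^k a_j\big)(s+s^{\alpha_k+1})$, where $\xi=sg(s)$.

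The crux is to pass from these estimates in $s$ to estimates in $\xi$, namely to show that $1+\xi^{a}$ is comparable, up to fixed positive constants, to $1+s^{\alpha_k}$ for $a=\alpha_k/(\alpha_k+1)$; I will write $\asymp$ for this comparability. This is exactly where the value of $a$ is forced: because $(\alpha_k+1)a=\alpha_k$, the leading term $s^{\alpha_k+1}$ of $\xi$ is mapped by $t\mapsto t^{a}$ precisely onto $s^{\alpha_k}$. Concretely, the lower bound $\xi\ge\tfrac12\min(a_0,a_k)\,s^{\alpha_k+1}$ yields $\xi^{a}\ge c\,s^{\alpha_k}$, while subadditivity of $t\mapsto t^{a}$ (valid since $a\le 1$) together with the upper bound gives $\xi^{a}\le C(s^{a}+s^{\alpha_k})$, and the stray $s^{a}$ is absorbed using $s^{a}\le 1$ for $s\le 1$ and $s^{a}\le s^{\alpha_k}$ for $s\ge 1$ (here $a\le\alpha_k$). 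Combining $g(G(\xi))=g(s)\asymp 1+s^{\alpha_k}\asymp 1+\xi^{a}$ and inverting gives \eqref{Kbound}. I expect the only delicate point to be the transition regime $s\approx 1$: away from it a single leading monomial dominates each quantity, but to obtain \eqref{Kbound} with clean constants $C_0,C_1$ and no surviving case split in $\xi$, one must lean on the uniform inequalities $s^{\alpha_j}\le 1+s^{\alpha_k}$ and on the subadditivity trick precisely to control that regime.
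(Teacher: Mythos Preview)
Your argument is correct. The well-definedness and monotonicity parts are exactly as one would expect: $g(0)=a_0>0$ and $g'\ge 0$ give \eqref{G-cond}, so $s\mapsto sg(s)$ is a strictly increasing bijection of $[0,\infty)$ onto itself, $G$ exists, and $K=1/(g\circ G)$ is decreasing. For the two-sided bound your chain of comparisons
\[
K(\xi)=\frac{1}{g(s)}\asymp\frac{1}{1+s^{\alpha_k}}\asymp\frac{1}{1+\xi^{a}},\qquad s=G(\xi),\ \xi=sg(s),
\]
is sound: the first equivalence is your sandwich $\tfrac12\min(a_0,a_k)(1+s^{\alpha_k})\le g(s)\le\big(\sum_j a_j\big)(1+s^{\alpha_k})$, and the second follows from $\xi\asymp s+s^{\alpha_k+1}$ together with $(\cdot)^{a}$-subadditivity and $s^{a}\le 1+s^{\alpha_k}$. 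The check $a=\alpha_k/(\alpha_k+1)\le\alpha_k$ (equivalent to $\alpha_k\ge 0$) is exactly what makes the ``stray'' $s^a$ absorbable, so your worry about the regime $s\approx 1$ is already handled by those uniform inequalities; no residual case split survives in $\xi$.

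As for comparison with the paper: the paper does not reproduce a proof here but defers to \cite{MF}, noting only that the argument ``makes use of a condition that is automatically satisfied by a GPPC'' (namely \eqref{G-cond}). Your proof supplies precisely the elementary monomial-comparison argument one expects for such a result, so there is nothing to contrast at the level of this paper; your write-up is self-contained and would serve as a complete proof in place of the citation.
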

%%%%%%%%%%%%%%%%%%%%%%%%%%%%%%%%%%%%%%%%%%%%%%%%%%%%%%%%%%%%%%%%%%%%%%%%%%%%%%%%
\noindent The proof of the previous Lemma can be found in \cite{MF} and makes use of a condition that is automatically satisfied by a GPPC.

Note that $a=0$ corresponds to the linear Darcy's case, while in the limiting case $a\rightarrow 1$ the largest exponent $\alpha_k$ diverges.
%=================================================================%
\section{Pseudo Steady State Solutions and Productivity Index}
\label{PSS}
%=================================================================%
In engineering and physics, it is often essential to identify special time-dependent pressure distributions that generate flows which are time-invariant.
In this section, we introduce the class of so-called pseudo-steady state (PSS) solutions which is used frequently  by reservoir and hydraulic engineers to evaluate the ``capacity" of the well (see. \cite{SIAM, AIVW3, Ibragimov1} and references therein).

\begin{definition}
A solution $\overline{p}(x,t)$ of the equation \eqref{lin-p-1} in domain $U$, satisfying the Neumann condition on $\Gamma_e$  is called a pseudo steady state (PSS) with respect to a constant $A$ if
\beq \frac{\partial \overline{p}(x,t)}{\partial t}=const.=-A\quad \hbox{for all}\quad t.\eeq
\end{definition}

Equation \eqref{lin-p-1} then implies

\beq\label{PSSeq} \frac{\partial \overline{p}(x,t)}{\partial t}= -A = \nabla \cdot (K(|\nabla \overline {p}|)\nabla \overline {p}).\eeq

Using Green's formula and the Neumann boundary condition on $\Gamma_e$ one derives
$$A|U|=-\int_{\Gamma_i} (K(|\nabla p|)\nabla p)\cdot N d\sigma = \int_{\Gamma_i} v\cdot N d\sigma =Q(t).$$
Therefore, the total flux of a PSS solution is time-independent
\beq\label{Poisson} Q(t)=A|U|=Q=const., \quad \hbox{for all}\quad t.\eeq

The PSS solutions  inherit two important features, which we will explore further.
On one hand, the total flux is defined by stationary equation \eqref{PSSeq} and is given.
On the other hand,  the trace of the solution on the boundary is split  \textit{a priori}.
Namely re-writing the PSS solution as
\beq\label{ph} \overline{p}(x,t)=-At + B + u(x),\eeq
one has $\nabla p=\nabla u$, hence $u$ and $p$ satisfy the same boundary condition on $\Gamma_e$. On $\Gamma_i$, in general, we consider
\beq\label{spss-phi} \overline{p}(x,t)=-At +B+\varphi(x), \quad\hbox{on} \quad \Gamma_i,\eeq
where $\varphi(x)$ is given and the constant $B$ is selected such that
\beq\int_{\Gamma_i}\varphi d\sigma=0.\eeq
Therefore $u(x)$ satisfies
\beq \label{h-eq}-A = \nabla \cdot K(|\nabla u|)\nabla u,\eeq
\beq\label{neumann-h} \frac{\partial u}{\partial N}=0 \quad\hbox{on}\quad \Gamma_e,\eeq
\beq\label{pss-h} u=\varphi \quad\hbox{on}\quad \Gamma_i.\eeq

Of particular interest is the case $\varphi(x)=0$ on $\Gamma_i$. From a physical point of view, it relates to the constraint that conductivity inside well is non-comparably higher than in the porous media.

We call $u(x)$ the profile of PSS corresponding to $A$ and the boundary profile $\varphi(x)$.

%%%%%%%%%%%%%%%%%%%%%%%%%%%%%%%%%%%%%%%%%%%%%%%%%%%%%%%%%%%%%%%%%%%%%%%%%%%%%%%%
\begin{remark}
Note that for a PSS  as in \eqref{ph}, we have the quantity
\beqs
J(t)=\frac{Q(t)}{\frac1{|U|}\int_U {p}(x,t)dx-\frac1{|\Gamma_i|}\int_{\Gamma_i}{p}(x,t)d\sigma}=
\frac{Q}{\frac1{|U|}\int_U u(x)dx}.\eeqs
It represents the production rate versus the pressure drawdown (the difference between averages
in the domain and on the boundary
$\Gamma_i$), and is independent of time.
This quantity is called Productivity Index, and it is widely used by engineers
to test the performances of a well/reservoir system (see  \cite{SIAM, AIVW3,Ibragimov1}).
\end{remark}
%%%%%%%%%%%%%%%%%%%%%%%%%%%%%%%%%%%%%%%%%%%%%%%%%%%%%%%%%%%%%%%%%%%%%%%%%%%%%%%%

%%%%%%%%%%%%%%%%%%%%%%%%%%%%%%%%%%%%%%%%%%%%%%%%%%%%%%%%%%%%%%%%%%%%%%%%%%%%%%%%

%%%%%%%%%%%%%%%%%%%%%%%%%%%%%%%%%%%%%%%%%%%%%%%%%%%%%%%%%%%%%%%%%%%%%%%%%%%%%%%%

\section{Geometric Interpretation }
In the PDE literature equation (see Serrin G., 1967, Gilbarg, D. {\&}
Trudinger, N.1977, Evans L. 1999)
\begin{equation}
\label{eq31}
div\left( {\left( {1+\left| {\nabla u} \right|^2} \right)^{-1/2}\nabla u}
\right)=-2H
\end{equation}
is referred as homogeneous CMC equation were $u$ is a graph defined in the
domain $U\subset R^2$.
We will also refer to it as a CMC graph equation. We noted
that this equation looks somewhat similar to
\begin{equation}
\label{eq32}
div\left( K(|\nabla u|)\nabla u \right)=-A,
\end{equation}
which has been previously introduced in defining the basic PSS profile for the non-linear Forchheimer equation.

It is not difficult to see that the non linear term in
\eqref{eq31} is ``about'' $(1+|\nabla u|)^{-1}$, which means that there exist constants $C_0$ and $C_1$
such that
\beq
\frac{C_0}{1+|\nabla u|}\le  \frac{1} {\left({1+\left| {\nabla u} \right|^2} \right)^{1/2}}\le\frac{C_1}{1+|\nabla u|}.
\eeq
We also showed in Lemma \ref{K-estimate} that the non linear term $K(|\nabla p|)$ in \eqref{eq32}
is, in the GPPC case, ``about'' $(1+|\nabla u|^a)^{-1}$. Here $a=\alpha_k/(\alpha_k+1)<1$, where $\alpha_k\ge0$ is the degree, $deg(g)$,
of the specific GPPC polynomial. Then in the limiting case $a \rightarrow 1$, we can expect the two non-linear coefficients in Eqs.
\eqref{eq31} and \eqref{eq32} to have the same structure.  Therefore one cannot expect the PSS Forchheimer equation to ``survive'' in limiting case $a=1$. At the same time it is worth mentioning that, for the case $a<1$, the weak solution of the PPS Forchheimer equation is unique (see \cite{MF}) and exists in the corresponding Sobolev space $W^{1,2-a}$. This result was treated in detail in \cite{MF}.

In the next section we will introduce a few basic
geometric notions and definitions, in order to show the robust link between
these two objects. The actual relationship between the two equations is far from
being straight-forward, in spite of their similarity. In particular, we will
show that the pressure function can be interpreted as a CMC graph under some
constraints and appropriate geometric transformations.

%%%%%%%%%%%%%%%%%%%%%%%%%%%%%%%%%%%%%%%%%%%%%%%%%%%%%%
\section{The mean curvature equation for a graph.}

Any $C^2$ map $r:D\subset R^2\rightarrow R^3$ whose differential map $d r$
is injective is called an {\it immersion} (surface immersion) in $R^3$. Equivalently, the map $r$
represents an immersion if its Jacobian has rank 2, or all points
are {\it regular}. If an immersion $r$ is 1-1, it is sometimes
called an {\it embedding}. Any immersed surface can be endowed with
a general Riemannian metric $g=g(x,y)$ (\cite{Spi}, page 418). In
the particular case when the metric $g$ is defined using the usual
velocity vector fields, we will call it {\it naturally induced
metric} (i.e., naturally induced by the immersion).
In the most usual notation, $(M,g)$ represents a Riemannian
manifold of Riemannian metric $g$, and $M=r(D)$.

Consider a smooth surface that can represented as a graph
$z=u(x,y)$ of an open domain $D\subset R^2$. This surface is
parameterized via the map $r:D\subset R^2\rightarrow R^3$,
\begin{equation}
r(x,y)=(x,y,u(x,y))\,. \label{eq_r}
\end{equation}

\begin{definition} (Natural metric) We will call naturally induced metric the
following quadratic differential form:
\begin{equation}
dr^2(x,y)=|r_x|^2 dx^2+2\,<r_x,r_y>\, dx\,dy+|r_y|^2 dy^2\,,
\label{eq_rm}
\end{equation}
which can be rewritten as
\begin{equation}
dr^2(x,y)=(1+u_x^2)\,dx^2+2\,u_x\,u_y\;dx\,dy+(1+u_y^2)\,dy^2\,,
\label{eq_rm2}
\end{equation}
where the coefficients
$$g_{11}=1+u_x^2\,,\quad g_{12}=u_x\,u_y\,\quad g_{22}=1+u_y^2$$
represent the entries of the corresponding matrix $g$.
\end{definition}

\begin{definition} (Gauss Map) We will call Gauss map the usual unit normal
vector field $N:D\rightarrow S^2$ defined as:

\begin{equation}
N=\dfrac{r_x \times r_y}{\parallel r_x \times r_y \parallel}=
\dfrac{-u_x\,\mathbf{i}-u_y\,\mathbf{j}+\,\mathbf{k}}{\sqrt{u_x^2+u_y^2+1}}.\label{eq_N}
\end{equation}
\end{definition}

\begin{definition} (Second Fundamental form) The second fundamental form is defined by
\begin{equation}
d\sigma^2(x,y)=h_{11}\,dx^2+2\,h_{12}\,dx\,dy+h_{22}\,dy^2\,,
\label{eq_2ff}
\end{equation}
with the following coefficients
\begin{eqnarray}
&& h_{11}:=<N,r_{xx}> , \qquad h_{12}:=<N,r_{xy}>, \qquad h_{22}:=<N,r_{yy}>,
\end{eqnarray}
and so it can be expressed as
\begin{equation}
d\sigma^2(x,y)=\frac{u_{xx}}{\sqrt{u_x^2+u_y^2+1}}\,dx^2
         +2\frac{u_{xy}}{\sqrt{u_x^2+u_y^2+1}}\,dx\,dy
         +\frac{u_{yy}}{\sqrt{u_x^2+u_y^2+1}} \,dy^2\,.  \label{eq_2ff2}
\end{equation}
\end{definition}

Note that once the local coordinates are given, one can identify
the first and second fundamental forms with their corresponding
$2\times 2$ matrices, $g$ and $h$, respectively. The matrix
operator $S=g^{-1} h$ can be viewed as a linear operator from the
tangent plane to the surface at a given point, to the same tangent
plane. $S$ is usually called the Weingarten map or shape operator
(see \cite{Spi}, vol. II). Note that $g^{ij}\cdot h_{ij}$
represents its trace. The following result is a classical
result of differential geometry, which can be found in any text-book
(e.g.,  \cite{Spi}, vol. II), and whose proof is elementary.

\begin{proposition} For any first and second fundamental forms
defined for an immersion $r=r(x,y)$,  the following formula is
satisfied:
\begin{equation}
g^{ij}\cdot h_{ij}=2H,
\end{equation}
where $g$ and $h$ are matrices corresponding to the first and second
fundamental form, and $H$ represents the mean curvature, defined as the
arithmetic mean of the principal curvatures of the immersion $r$.
\end{proposition}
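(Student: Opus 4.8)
The plan is to show that the trace of the shape operator $S = g^{-1}h$ equals twice the mean curvature, where the mean curvature is the arithmetic mean of the principal curvatures. The principal curvatures $\kappa_1, \kappa_2$ are by definition the eigenvalues of the shape operator $S$, so that $H = (\kappa_1 + \kappa_2)/2$ and hence $2H = \kappa_1 + \kappa_2 = \mathrm{tr}(S)$. Thus the entire content of the proposition reduces to the linear-algebraic identity $g^{ij} h_{ij} = \mathrm{tr}(g^{-1} h)$, together with the invariance of the trace under the similarity transformations induced by changes of local coordinates.

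First I would recall that for a $2 \times 2$ matrix, $\mathrm{tr}(g^{-1}h) = \sum_i (g^{-1}h)_{ii} = \sum_i \sum_j (g^{-1})_{ij} h_{ji}$. Writing $g^{ij} := (g^{-1})_{ij}$ for the entries of the inverse metric, and using the symmetry of both $g$ (hence $g^{-1}$) and $h$, this double sum is precisely $\sum_{i,j} g^{ij} h_{ij}$, which in the Einstein summation convention employed in the statement is written $g^{ij} \cdot h_{ij}$. This establishes that $g^{ij} h_{ij} = \mathrm{tr}(S)$ directly from the definitions of matrix inverse and trace, with no appeal to the explicit coordinate expressions for $g_{ij}$ and $h_{ij}$ given earlier.

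Next I would connect the trace to the principal curvatures. Since $S$ is self-adjoint with respect to the first fundamental form $g$ (this is the standard symmetry of the second fundamental form relative to the metric), $S$ is diagonalizable with real eigenvalues $\kappa_1, \kappa_2$, which are exactly the principal curvatures. The trace of a diagonalizable operator is the sum of its eigenvalues, so $\mathrm{tr}(S) = \kappa_1 + \kappa_2$. Combining with the previous step and the definition $H = (\kappa_1+\kappa_2)/2$ yields $g^{ij} h_{ij} = \kappa_1 + \kappa_2 = 2H$, as claimed.

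The main subtlety, rather than a genuine obstacle, is coordinate invariance: one should observe that under a change of parameters the matrices $g$ and $h$ both transform by congruence, $g \mapsto P^\top g P$ and $h \mapsto P^\top h P$, so that $S = g^{-1}h$ transforms by the similarity $S \mapsto P^{-1} S P$. The trace is invariant under similarity, which confirms that $g^{ij} h_{ij}$ is a well-defined geometric scalar and not merely an artifact of the chosen parametrization $r(x,y) = (x,y,u(x,y))$. Since the proposition is stated as a classical textbook fact with an elementary proof, the argument above is complete up to these standard verifications; I would not reproduce the explicit computation of $g_{ij}$, $h_{ij}$, or $H$ in terms of $u_x, u_y, u_{xx}$, etc., as that is routine and unnecessary for the trace identity.
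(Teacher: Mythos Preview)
Your argument is correct and is precisely the standard textbook proof: identify $g^{ij}h_{ij}$ with $\mathrm{tr}(g^{-1}h)=\mathrm{tr}(S)$, then use self-adjointness of the shape operator to conclude that its trace is $\kappa_1+\kappa_2=2H$. The paper itself does not supply a proof at all---it simply cites the result as classical (referring to Spivak) and calls the proof elementary---so your proposal is exactly the kind of argument the paper is deferring to.
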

This equation is frequently referred to as {\bf mean curvature
equation}.

\begin{definition}(Laplace-Beltrami operator)
The Laplace-Beltrami operator $\Delta_g\,u$ corresponding to the
graph $z=u(x,y)$ and the metric defined in eq (\ref{eq_rm2}) is
defined as (see \cite{EL}):
\begin{equation}
\Delta_g\,u=\dfrac{1}{\sqrt{\det g}}\cdot g^{ij}
\dfrac{\partial^2u}{\partial x^i\,\partial x^j}.
\end{equation}
\end{definition}

 It is worthwhile noting that the Laplace-Beltrami operator is frequently defined
without the factor $\dfrac{1}{\sqrt{\det g}}$;  on the other hand,
this factor plays an important role in our work. Historically
speaking, its original definition is the same as ours. Also, it is
important to clarify that the Laplace Beltrami operator of a general
surface immersion $r$ is defined component-wise and represents a
vector $\Delta_g (r)$, which in our case it reduces to its last
component, $\Delta_g (u)$.

\cite{EL} was the first well-known reference to make the observation
that for surfaces immersed in $R^3$ the operator $g^{ij}\cdot
h_{ij}$ acting at each point coincides with the Laplace-Beltrami
operator. In view of this observation and the previous definition,
the mean curvature equation (13) can be rewritten as

$$\Delta_g\,u=2H.$$

A $C^2$ immersion $r:D\subset R^2\rightarrow R^3$ with vanishing
Laplace-Beltrami operator is said to be harmonic in a generalized
(Riemannian) sense.

Remark that for the case when the surface metric is the flat
Euclidean one $g$ represents the identity matrix and the
Laplace-Beltrami operator becomes the usual Laplace operator $\Delta
= \partial_{xx} + \partial_{yy}$.

A well-known result of geometric surface theory  stated
that an immersion $r$ as above is harmonic if and only if it is
minimal \cite {EL}. This result agrees with our previous Proposition and
Definition.

\ The mean curvature equation will be referred to as {\it constant
mean curvature equation} for the case when $H$ is constant. The
case of $H=const\ne0$ represents the case of CMC surfaces (to be
distinguished from the minimal case $H=0$).

We showed in \cite{AIT} that one can independently modify the
coordinate functions of the velocity vector, in a way that links the CMC equation
 to the Forchheimer-type equation.

 We started from the partial velocities $r_x$ and
$r_y$ at the point $P(x,y)$ of the initial graph
$r(x,y)=(x,y,u(x,y))$, and we applied the following transformation:
\begin{eqnarray}
&& {r_x}\rightarrow \tilde r_x:=(\chi,0, \mu(x,y) u_x(x,y)) \label{rxt}\\
&& {r_y}\rightarrow \tilde r_y:=(0,\chi, \mu(x,y) u_y(x,y))\,,\label{ryt}
\end{eqnarray}
 where
$\chi$ is a scaling constant and $\mu(x,y)$ is a smooth function.

Note that the notations $\tilde r_x$, $\tilde r_y$ do not a priori assume the existence of
an immersion $\tilde r$ whose partial velocity fields are written in this form.
We have studied the conditions in which there exists such a parametric surface.
This represents an easy case of the Frobenius theorem, and the existence condition of such
an immersion $\tilde r$ reduces to the compatibility condition $(\tilde r_x)_y=(\tilde r_y)_x$
being satisfied for the vectors defined above. For details, see Example 1.2.3, from \cite{Ivey},
based on successively applying Picard's Theorem in the $x$ and $y$-directions, respectively.
Assuming that this compatibility condition is satisfied, Example 1.2.3, from \cite{Ivey}, shows
that for any fixed initial position $P_0$ at the origin $(0,0)$ (or another fixed base point), there
exists a unique solution ${\tilde r}=(\tilde{x},\tilde{y},{\tilde u}(x,y))$, as an
immersion whose partial velocity vectors are ${\tilde r}_x$
and ${\tilde r}_y$.

The change $u(x,y)\rightarrow \tilde u(x,y)$ represents a smooth
deformation of the graph along the $z$-axis, while the change
$(x,y)\rightarrow (\tilde{x},\tilde{y})$, with $\tilde{x}=\chi\,x$ and $\tilde{y}=\chi\,y$
represents the rescaling of the graph domain from $D$ to $\tilde{D}$.
This non-trivial transformation modifies the Gauss map
(the tangent plane), as well as the shape operator.

\begin{proposition}
Consider a smooth graph in $R^3$ that is viewed as an immersion from
an open simply connected planar domain $D$ into the Euclidean space
$R^3$ via $r(x,y)=(x,y,u(x,y))$.

At every point on $M=r(D)$ consider the {\it modified velocity
vectors} defined as:
\

${\tilde r}_x=(\chi,0, \mu(x,y)u_x)$ and ${\tilde r}_y=(0,\chi,\mu(x,y)u_y)$,

where $\chi$ is constant and $\mu(x,y)$ is an arbitrary smooth function such
that ${\tilde r}_x$ and ${\tilde r}_y$ are non-vanishing and
linearly independent and such that the compatibility condition
$(\tilde r_x)_y=(\tilde r_y)_x$ is satisfied (or equivalently, $\mu_x u_y = \mu_y u_x$).

Consider a fixed initial position $P_0$ at the origin $(0,0)$ (or base point). Let
${\tilde r(x,y)}=(\tilde{x},\tilde{y}, \tilde u(x,y))$ represent the uniquely determined
integral surface having $\tilde r_x$ and $\tilde r_y$ as partial
velocity vectors, such that $\tilde r (0,0)=P_0$. These vectors naturally induce the first
fundamental form $\tilde g$. The corresponding acceleration vectors
together with the Gauss map ${\tilde N}(x,y)$ naturally induce the second fundamental form $\tilde h$.

Then the corresponding Laplace-Beltrami operator can be interpreted
in terms of the trace of the shape operator, that is,
\begin{equation}
\Delta_{\tilde g}\,{\tilde u}={\tilde g^{ij}}{\tilde h_{ij}}=2
\tilde H.
\end{equation}
\end{proposition}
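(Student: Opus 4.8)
The plan is to reduce the statement to the two classical facts already recorded in this section: the identity $g^{ij}h_{ij}=2H$ valid for an arbitrary immersion, and the observation of \cite{EL} that for a surface written as a graph the operator $g^{ij}h_{ij}$ coincides with the normalized Laplace--Beltrami operator. The existence and uniqueness of $\tilde r=(\tilde x,\tilde y,\tilde u)$ may be taken for granted, since it is exactly the content of the Frobenius/Picard argument quoted from \cite{Ivey}: the compatibility hypothesis $\mu_x u_y=\mu_y u_x$ is precisely the closedness of the one-form $\mu u_x\,dx+\mu u_y\,dy$, so that a height function $\tilde u$ with $\tilde u_x=\mu u_x$, $\tilde u_y=\mu u_y$ is well defined once the base point $P_0$ is fixed.

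The key structural observation I would isolate first is that, in the rescaled variables $\tilde x=\chi x$, $\tilde y=\chi y$, the integral surface $\tilde r$ is again a graph over the rescaled domain $\tilde D$, of slope $\partial_{\tilde x}\tilde u=(\mu/\chi)u_x$ and $\partial_{\tilde y}\tilde u=(\mu/\chi)u_y$. This is the point that lets me reuse verbatim the graph formulas established earlier for $r=(x,y,u)$. With this in hand, the second equality $\tilde g^{ij}\tilde h_{ij}=2\tilde H$ is immediate: it is the classical Proposition applied to the $C^2$ immersion $\tilde r$, needing nothing beyond the hypothesis that $\tilde r_x$, $\tilde r_y$ are non-vanishing and linearly independent, which guarantees $\tilde r$ is a genuine regular immersion. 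Indeed $\tilde g^{ij}\tilde h_{ij}=\operatorname{tr}(\tilde g^{-1}\tilde h)$ is the trace of the shape operator, a geometric invariant, so no separate computation is needed here.

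For the first equality $\Delta_{\tilde g}\tilde u=\tilde g^{ij}\tilde h_{ij}$ I would invoke the observation of \cite{EL} in the graph coordinates $(\tilde x,\tilde y)$. Because $\tilde r=(\tilde x,\tilde y,\tilde u)$ is a graph, the acceleration vectors $\tilde r_{\tilde x\tilde x}$, $\tilde r_{\tilde x\tilde y}$, $\tilde r_{\tilde y\tilde y}$ are purely vertical, whence $\tilde h_{ij}=\tilde u_{\tilde i\tilde j}/\sqrt{\det\tilde g}$ exactly as in the model computation for $r=(x,y,u)$; contracting with $\tilde g^{ij}$ and comparing with the definition $\Delta_{\tilde g}\tilde u=(\det\tilde g)^{-1/2}\,\tilde g^{ij}\,\partial_{\tilde i}\partial_{\tilde j}\tilde u$ yields the claimed identity. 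Chaining the two equalities then gives $\Delta_{\tilde g}\tilde u=\tilde g^{ij}\tilde h_{ij}=2\tilde H$.

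The main thing to watch, and what I expect to be the only genuine obstacle, is the coordinate bookkeeping in this last step. The normalized Laplace--Beltrami operator used here is not invariant under reparameterization, since it omits the derivatives of $\sqrt{\det g}\,g^{ij}$; consequently the identity $\Delta_{\tilde g}\tilde u=\tilde g^{ij}\tilde h_{ij}$ holds only when $\tilde u$ and its second derivatives are taken with respect to the graph variables $(\tilde x,\tilde y)$, not the original $(x,y)$. Carrying the computation out in $(x,y)$ instead would introduce a spurious factor of $\chi^2$. Thus the one ingredient I would insist on establishing carefully, beyond quoting the two classical results, is the recognition at the outset that $(x,y)\mapsto(\tilde x,\tilde y)=(\chi x,\chi y)$ restores $\tilde r$ to graph form over $\tilde D$, so that the \cite{EL} identity applies without modification.
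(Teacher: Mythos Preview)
Your approach is correct and genuinely different from the paper's. The paper does not invoke the classical graph identities abstractly; instead, after citing \cite{Ivey} for the existence of $\tilde r$, it simply computes everything explicitly in the $(x,y)$ variables --- the metric $\tilde g$, the normal $\tilde N$, the second fundamental form coefficients $\tilde h_{ij}$, and the Laplace--Beltrami expression --- and then rewrites the result as the divergence $\nabla\cdot\big(\mu\nabla u/(\chi\sqrt{\chi^2+\mu^2|\nabla u|^2})\big)$, which it identifies with $\tilde\nabla\cdot(\tilde\nabla\tilde u/\sqrt{1+|\tilde\nabla\tilde u|^2})=2\tilde H$. So the paper's argument is a direct verification by brute-force calculation, deferring details to \cite{AIT}.

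Your route --- observe that in the rescaled variables $(\tilde x,\tilde y)=(\chi x,\chi y)$ the surface $\tilde r$ is again a graph, and then quote the classical proposition $g^{ij}h_{ij}=2H$ together with the Eells--Lemaire identification --- is shorter and more conceptual, and your caveat about the $\chi^2$ factor is exactly the right thing to watch: the paper's non-invariant Laplace--Beltrami operator matches $\tilde g^{ij}\tilde h_{ij}$ only in the graph coordinates $(\tilde x,\tilde y)$, not in $(x,y)$. What the paper's longer computation buys is the explicit divergence-form formula in the original variables, which is precisely what is used in the subsequent Theorem to connect with the Forchheimer equation; your argument establishes the identity but does not by itself produce that formula.
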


\begin{remark} Note that $\tilde r$ actually represents a family
of immersions of parameter $\mu $, a real valued smooth function of
two variables. The initial immersion $r$ belongs to this family,
corresponding to the case $\chi=\mu=1$. We will call $z=\tilde{u}(x,y)$
{\it generalized graph}, and $\tilde g$ its naturally induced
metric.
\end{remark}

Example 1.2.3, from \cite{Ivey}, based on successively applying Picard's Theorem in the $x$ and $y$-directions, provides
an explicit solution for $\tilde r$ above.
Assuming that the compatibility condition is satisfied, we denote by
${\tilde r}=(\tilde{x},\tilde{y},{\tilde u}(x,y))$ an
immersion whose partial velocity vectors are ${\tilde r}_x$
and ${\tilde r}_y$. Some computational details can be found in \cite{AIT}. We therein collect the main information
on the first and second fundamental forms of the generalized graph, namely:

We first derive the expressions of the data corresponding to the immersion $\tilde r$. The induced metric is given by
\begin{equation}
d\tilde{r}^2(x,y)=(\chi^2+\mu^2 u_x^2)\,dx^2+2 \mu^2 u_x\,u_y\,dx\,dy+
(\chi^2+\mu^2 u_y^2)\,dy^2\,.
\end{equation}
The unitary normal vector field is given by
\begin{equation}
\tilde{N}=\dfrac{\tilde{r}_x\times\tilde{r}_y}{||\tilde{r}_x\times\tilde{r}_y||}=
\dfrac{-\mu\,u_x\,\mathbf{i}-\mu\,u_y\,\mathbf{j}+\chi\,\mathbf{k}}{\sqrt{\chi^2+\mu^2\,(u_x^2+u_y^2)}}\,.
\end{equation}
The coefficients of the second fundamental form, as entries of the corresponding matrix $\tilde{h}$, are:
\begin{eqnarray}
&& \tilde{h}_{11}=\dfrac{\chi\,(\mu\,u_{xx}+u_x\,\mu_x)}{\sqrt{\chi^2+\mu^2\,(u_x^2+u_y^2)}}, \\&&  \tilde{h}_{12}=
\dfrac{\chi\,(\mu\,u_{xy}+u_x\,\mu_y)}{\sqrt{\chi^2+\mu^2\,(u_x^2+u_y^2)}}=\dfrac{\chi\,(\mu\,u_{xy}+u_y\,\mu_x)}{\sqrt{\chi^2+\mu^2\,(u_x^2+u_y^2)}}\\
&& \tilde{h}_{22}=
\dfrac{\chi\,(\mu\,u_{yy}+u_y\,\mu_y)}{\sqrt{\chi^2+\mu^2\,(u_x^2+u_y^2)}}
\end{eqnarray}

The corresponding Laplace-Beltrami operator is given by
\begin{equation}
\Delta_{\tilde g}
=\dfrac{\mu\,(\chi^2+\mu^2\,u_y^2)\,u_{xx}-2\mu^3\,u_x\,u_y\,u_{xy}+
\mu\,(\chi^2+\mu^2\,u_x^2)\,u_{yy}}
{\chi\left(\chi^2+\mu^2\,(u_x^2+u_y^2)\right)^{3/2}}+
\dfrac{\chi({u_x\,\mu_x+u_y\,\mu_y})}
{\left(\chi^2+\mu^2\,(u_x^2+u_y^2)\right)^{3/2}}
\end{equation}

which can be rewritten as

\begin{eqnarray}
&&\Delta_{\tilde g}=
\nabla\cdot\left(
\dfrac{\mu \nabla u}
{\chi\sqrt{\chi^2+\mu^2|\nabla u|^2}}
\right)\,
=
\tilde{\nabla}\cdot\left(
\dfrac{\tilde{\nabla} \tilde{u}}
{\sqrt{1+|\tilde{\nabla} \tilde{u}|^2}}\right)
=2\,\tilde{H}\,.\label{CMCt}
\end{eqnarray}
Here $\tilde{\nabla} \cdot$ and  $\tilde{\nabla}$ are the divergence and the gradient
operator in the new reference system $(\tilde{x},\tilde{y})$.

\begin{remark}
In case $\tilde{H}$ is constant, $\tilde{u}$ is a CMC graph with respect to the domain $\tilde{D}$, provided a solution to (\ref{CMCt}) exists.
\end{remark}

The following theorem is an immediate consequence (see \cite{AIT} for details of the proof):

\begin{theorem}
Consider a smooth graph $u(x,y)$
in the domain $D(x,y)$,  solution of
\begin{eqnarray}
\nabla\cdot\big(K(|\nabla u|)\nabla u\big)=-A\,, \label{BVP}
\end{eqnarray}
and such that
\begin{equation}
(u_x\,u_{xy}+u_y\,u_{yy})\,u_x=(u_x\,u_{xx}+u_y\,u_{yx})\,u_y\,.\label{IL}
\end{equation}

Let $\tilde{D}(\tilde{x},\tilde{y})$ be the scaled domain obtained by the conformal mapping $\tilde{x}=\chi\,x$ and $\tilde{y}=\chi\,y$.
Let $\tilde{u}(\tilde{x},\tilde{y})$ be the stretched graph, which is parameterized as the immersion $\tilde r$
with partial velocities ($\tilde{r}_x$,$\tilde{r}_y$)  given by (\ref{rxt}-\ref{ryt})
and
\begin{eqnarray}
&& \mu(\chi,|\nabla u|)=\dfrac{-\chi K(|\nabla u|)}
{\sqrt{1-\chi^2 K(|\nabla u|)^2|\nabla u|^2}}\,, \label{mu}\\
&& \chi <\chi_{\max}=\dfrac{1}{|K(|\nabla u|)\,\nabla u|_{\max}}=\dfrac{1}{|v| _{\max}}\,. \label{chi}
\end{eqnarray}
Then $\tilde{u}(\tilde{x},\tilde{y})$ is solution of the corresponding CMC equation
\begin{equation}
\tilde{\nabla}\cdot\left(
\dfrac{\tilde{\nabla} \tilde{u}}
{\sqrt{1+|\tilde{\nabla} \tilde{u}|^2}}\right)
=A\,.\label{CMCta}
\end{equation}
\end{theorem}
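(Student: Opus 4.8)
The plan is to read the assertion straight off the master identity \eqref{CMCt}: for any admissible deformation function $\mu$, its left-hand member $\nabla\cdot\big(\mu\nabla u/(\chi\sqrt{\chi^2+\mu^2|\nabla u|^2})\big)$ equals the CMC operator $\tilde\nabla\cdot\big(\tilde\nabla\tilde u/\sqrt{1+|\tilde\nabla\tilde u|^2}\big)=2\tilde H$. Hence it suffices to pick $\mu$ so that the scalar factor multiplying $\nabla u$ on the left becomes exactly $-K(|\nabla u|)$; the profile equation \eqref{BVP} will then turn that left member into $A$, and the chain \eqref{CMCt} delivers \eqref{CMCta} with constant mean curvature $\tilde H=A/2$. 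So I would split the argument into three tasks: (i)~verify the prescribed $\mu$ is admissible, meaning real-valued and compatible; (ii)~carry out the algebraic matching; (iii)~assemble the chain.

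For the real-valuedness half of task (i), I would impose the defining requirement $\mu/\big(\chi\sqrt{\chi^2+\mu^2|\nabla u|^2}\big)=-K(|\nabla u|)$ and solve it for $\mu$; squaring and isolating $\mu$ produces the closed form \eqref{mu}, whose denominator carries the radical $\sqrt{1-\chi^2K(|\nabla u|)^2|\nabla u|^2}$. Since $|v|=K(|\nabla u|)\,|\nabla u|$ by the non-linear Darcy relation \eqref{u-forma}, this radical is real and positive at every point precisely when $\chi|v|<1$ throughout the domain, which is exactly the hypothesis \eqref{chi} that $\chi<\chi_{\max}=1/|v|_{\max}$. This keeps $\mu$ finite, so the modified velocities $\tilde r_x,\tilde r_y$ are well defined (they are automatically independent, their cross product having third component $\chi^2\neq0$).

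The compatibility half of task (i) is where the integrability hypothesis \eqref{IL} enters, and I regard it as the heart of the matter. Because $\mu$ has just been built as a function of $|\nabla u|$ alone (with $\chi$ constant), the chain rule gives $\mu_x=\mu'\,\partial_x|\nabla u|$ and $\mu_y=\mu'\,\partial_y|\nabla u|$, so the preceding Proposition's existence condition $\mu_x u_y=\mu_y u_x$ collapses to $u_y\,\partial_x|\nabla u|^2=u_x\,\partial_y|\nabla u|^2$. Writing $\partial_x|\nabla u|^2=2(u_xu_{xx}+u_yu_{yx})$ and $\partial_y|\nabla u|^2=2(u_xu_{xy}+u_yu_{yy})$ shows this is precisely \eqref{IL}. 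Thus $(\tilde r_x)_y=(\tilde r_y)_x$ holds, and by the Frobenius/Picard construction (Example~1.2.3 of \cite{Ivey}) the integral immersion $\tilde r=(\tilde x,\tilde y,\tilde u)$ exists and is unique once the base point $P_0$ is fixed. I expect this existence step to be the only genuine obstacle: without \eqref{IL} the prescribed partial velocities need not integrate to any surface at all, and every downstream computation would be vacuous.

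With $\tilde r$ in hand, task (iii) is immediate. By construction the scalar factor equals $-K(|\nabla u|)$, so the left member of \eqref{CMCt} is $\nabla\cdot\big(-K(|\nabla u|)\nabla u\big)=-\nabla\cdot\big(K(|\nabla u|)\nabla u\big)$, which equals $A$ by \eqref{BVP}. Pushing this through the equalities in \eqref{CMCt} yields $\tilde\nabla\cdot\big(\tilde\nabla\tilde u/\sqrt{1+|\tilde\nabla\tilde u|^2}\big)=A$, namely \eqref{CMCta}, and simultaneously identifies $2\tilde H=A$, so $\tilde u$ is a genuine CMC graph over $\tilde D$. The leftover bookkeeping, namely confirming that this $\mu$ reproduces \eqref{mu} and that the explicit fundamental-form expressions preceding \eqref{CMCt} hold for the present $\tilde r$, is routine and I would defer it to \cite{AIT}.
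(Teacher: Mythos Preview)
Your proposal is correct and follows essentially the same approach as the paper: the paper likewise observes that condition \eqref{chi} makes $\mu$ real-valued, that condition \eqref{IL} is exactly the compatibility $(\mu u_x)_y=(\mu u_y)_x$ needed for $\tilde u$ to exist, and that substituting the specific $\mu$ from \eqref{mu} into the master identity \eqref{CMCt} yields $\tilde\nabla\cdot\big(\tilde\nabla\tilde u/\sqrt{1+|\tilde\nabla\tilde u|^2}\big)=-\nabla\cdot(K(|\nabla u|)\nabla u)=A$, with the remaining computational details deferred to \cite{AIT}. Your write-up is in fact more explicit than the paper's (you spell out the chain-rule reduction of the compatibility condition and the derivation of \eqref{mu}), but the logical skeleton is identical.
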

\begin{remark}
Condition (\ref{chi}) assures $\mu$ to be a real smooth valued function.
\end{remark}

Condition (\ref{IL}) assures the compatibility condition
$$(\mu(\chi,|\nabla u|) u_x)_y=(\mu(\chi,|\nabla u|) u_y)_x$$
which are necessary and sufficient conditions for $\tilde u$ to exist.
Reformulated, (\ref{IL}) states that each level curve of the graph of $u$ (at $z=c_1$)
represents a level curve of the graph of $|\nabla u|$ (at $z=c_2$).

Replacing the generic function $\mu$ with (\ref{mu}) specified in the statement of this theorem into equation (\ref{CMCt}) immediately gives

\begin{equation}
\tilde{\nabla}\cdot\left(
\dfrac{\tilde{\nabla} \tilde{u}}
{\sqrt{1+|\tilde{\nabla} \tilde{u}|^2}}\right)=
-\nabla\cdot\big(K(|\nabla u|)\nabla u\big)\,=A
\end{equation}

This equality practically maps solutions of the Forchheimer equation (\ref{BVP}) to solutions of the CMC
equation (\ref{CMCt}) through an explicit transformation,
under a certain natural assumption.

\begin{proposition}
Consider a smooth graph $u(x,y)$ solution of the PSS Forchheimer equation (\ref{BVP})
and satisfying (\ref{IL}). Consider the  associated CMC graph $\tilde{u}(x,y)$ solution of (\ref{CMCta})
with $\mu$ given by (\ref{mu}) and $\chi<\chi_{max}$. Let $\eta=|\nabla u|$, $\xi=|\nabla \tilde{u}|$,
and $\tau=\frac{\xi}{\sqrt{1+\xi^2}}$, then
\beq \eta=g\big( |v(\tau,\chi)|\big)|v(\tau,\chi)|\,,\label{maineq}\eeq
where
\beq|v(\tau,\chi)|=\frac{\tau}{\chi}.\label{velocity}\eeq
Here $v$ is the velocity of the fluid flow in the porous media.
As a consequence
\begin{eqnarray}
&& u_x=-\eta{\frac{\tilde{u}_x }{\xi}}\,\label{ux}\\
&& u_y=-\eta{\frac{\tilde{u}_y }{\xi}}\,\label{uy}
\end{eqnarray}
\end{proposition}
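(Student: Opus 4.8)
The plan is to reduce both asserted identities to a single computation: that the Forchheimer speed $|v|=G(\eta)=K(\eta)\eta$ coincides with $\tau/\chi$. Once this is in hand, \eqref{velocity} is the definition of $|v|$, \eqref{maineq} follows from the inversion property of $G$, and \eqref{ux}--\eqref{uy} drop out of the collinearity of the two gradients. The hypothesis \eqref{IL} is invoked only to guarantee (via the Frobenius/Picard argument cited for $\tilde r$) that the integral surface $\tilde u$ actually exists; all the relations below are then algebraic consequences of the prescribed partial velocities.

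First I would extract the gradient relation between $u$ and $\tilde u$. By construction the third components of \eqref{rxt}--\eqref{ryt} give the \emph{base-variable} derivatives $\partial_x\tilde u=\mu u_x$ and $\partial_y\tilde u=\mu u_y$. Because the domain is rescaled by $\tilde x=\chi x$, $\tilde y=\chi y$, the chain rule turns these into derivatives in the new system, $\partial_{\tilde x}\tilde u=\tfrac{\mu}{\chi}u_x$ and $\partial_{\tilde y}\tilde u=\tfrac{\mu}{\chi}u_y$, i.e. $\tilde\nabla\tilde u=\tfrac{\mu}{\chi}\nabla u$. Since $\mu<0$ by \eqref{mu}, the two gradients are anti-collinear, which is exactly the source of the minus signs in \eqref{ux}--\eqref{uy}. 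Keeping $\partial_x$ and $\partial_{\tilde x}$ distinct throughout this step is the one place where a slip would corrupt every constant downstream, so I regard it as the genuine obstacle; after it is pinned down, the rest is forced.

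Taking magnitudes gives $\xi=|\tilde\nabla\tilde u|=\tfrac{-\mu}{\chi}\,\eta$. I would then substitute the explicit $\mu$ from \eqref{mu}, abbreviating $w:=K(\eta)\eta=|v|$ so that the radicand becomes $1-\chi^2 K(\eta)^2\eta^2=1-\chi^2 w^2$. The algebra collapses to $\xi=\dfrac{\chi w}{\sqrt{1-\chi^2 w^2}}$, whence $1+\xi^2=(1-\chi^2 w^2)^{-1}$ and therefore
\beq
\tau=\frac{\xi}{\sqrt{1+\xi^2}}=\chi w,
\eeq
which is precisely \eqref{velocity}: $|v|=w=\tau/\chi$. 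Note that the constraint \eqref{chi}, $\chi<\chi_{\max}=1/|v|_{\max}$, is exactly what keeps $1-\chi^2 w^2>0$, so that all radicals are real and positive.

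With $|v|=\tau/\chi$ now identified as the actual fluid speed, \eqref{maineq} is immediate from the $g$-Forchheimer law: $G$ is the inverse of $s\mapsto sg(s)$, so $g\big(G(\eta)\big)G(\eta)=\eta$, and since $|v|=G(\eta)$ this reads $\eta=g(|v|)|v|$. Finally, for \eqref{ux}--\eqref{uy} I would invert the first step: from $\tilde\nabla\tilde u=\tfrac{\mu}{\chi}\nabla u$ together with $\xi=\tfrac{-\mu}{\chi}\eta$ one obtains $\tfrac{\chi}{\mu}=-\tfrac{\eta}{\xi}$, hence $\nabla u=-\tfrac{\eta}{\xi}\,\tilde\nabla\tilde u$, i.e. $u_x=-\eta\,\tilde u_x/\xi$ and $u_y=-\eta\,\tilde u_y/\xi$, with $\tilde u_x,\tilde u_y$ read as the $(\tilde x,\tilde y)$-derivatives. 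Thus the substantive difficulty is not any isolated computation but the bookkeeping of the coordinate change and the sign of $\mu$; once $\tilde\nabla\tilde u=\tfrac{\mu}{\chi}\nabla u$ is established, every remaining identity is determined by the explicit form \eqref{mu} and by the defining inversion $sg(s)\leftrightarrow G$.
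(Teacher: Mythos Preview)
Your approach is the paper's: start from the relation between $\nabla\tilde u$ and $\mu\nabla u$, substitute the explicit $\mu$ from \eqref{mu}, solve the resulting scalar equation for $\tau$ in terms of $\xi$, identify $|v|=K(\eta)\eta=\tau/\chi$ via \eqref{u-forma}, and then read \eqref{maineq} off the $g$-Forchheimer inversion $sg(s)\leftrightarrow G$; the collinearity argument for \eqref{ux}--\eqref{uy} is likewise the same.

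There is, however, a bookkeeping slip exactly where you warned against one. You pass to the rescaled gradient and set $\xi=|\tilde\nabla\tilde u|=\tfrac{-\mu}{\chi}\,\eta$, then assert that substituting \eqref{mu} gives $\xi=\chi w/\sqrt{1-\chi^2 w^2}$. But $\tfrac{-\mu}{\chi}\,\eta=\dfrac{K(\eta)\eta}{\sqrt{1-\chi^2 w^2}}=\dfrac{w}{\sqrt{1-\chi^2 w^2}}$ (the $\chi$ in the numerator of $\mu$ cancels the one you divided by), and with \emph{that} value $\tau=\xi/\sqrt{1+\xi^2}$ does not collapse to $\chi w$. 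The proposition's $\xi=|\nabla\tilde u|$ is the base-variable gradient $(\partial_x\tilde u,\partial_y\tilde u)$, i.e.\ $\xi=|\mu|\eta$, which \emph{does} yield $\xi=\chi w/\sqrt{1-\chi^2 w^2}$ and hence $\tau=\chi w$; this is exactly how the paper computes, via $\mu\nabla u=\nabla\tilde u$. Your chain-rule detour to $\tilde\nabla$ and your substitution error happen to cancel, so the conclusions survive, but the derivation as written is internally inconsistent; dropping the rescaling step and working directly with $\nabla\tilde u=\mu\nabla u$ fixes it.
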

\begin{proof}
The proof is based on the vector identity
$$\mu(|\nabla u|) \nabla u = \nabla \tilde u\,,$$
which implies the scalar equality
$$(\mu(|\nabla u|) |\nabla u|)^2 = |\nabla \tilde u|^2\,.$$

Taking into account equation (\ref{mu}) for $\mu$, and substituting
${\eta}$ for $|\nabla u|$ and $\xi$ for $|\nabla \tilde u|$,
we obtain
\beq\dfrac{(\chi K(\eta) \eta)^2} {1-(\chi K(\eta)\eta)^2}=\xi^2\,.\label{tau}\eeq

Let us consider $\tau=\chi K(\eta) \eta$, which is positive.

Substituting and solving for $\tau$ in \eqref{tau}, we obtain
\beq\tau(\xi)=\frac{\xi}{\sqrt{1+\xi^2}}.\eeq
Recollecting Eq. \eqref{u-forma} for the velocity field, it follows
\beq|v(\tau,\chi)|= K(\eta)\eta=\frac{\tau}{\chi}.\eeq	
Now by using the g-Forchheimer Eq. \eqref{gForch}, it follows Eq \eqref{maineq}.
Clearly, the correspondence between $\xi$ and $\eta$
is one-to-one.
\end{proof}

\subsection{Application to reservoir engineering}

The following proposition was proved in \cite{SIAM}. We recall it here
only because the proposed solution strategy for the evaluation of the productivity index is
among the direct applications of the results hereby presented.
\begin{proposition}
For the GPPC case the PSS Productivity Index can be computed as
\begin{equation}\label{alternative}
 PI (a_i,\alpha_i,|v|)= \frac{Q^2}{\int\limits_Ug(|v|)|v^2dx}=\frac{Q^2}{\int\limits_U\sum_{j=0}^k a_j|v|^{\alpha_j+2}dx}.
\end{equation}
Here $PI$ is time invariant, and it depends explicitely on $a_i, \alpha_i$ , $|v|$ and domain $U$.
\end{proposition}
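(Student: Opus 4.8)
The plan is to produce an energy identity by testing the PSS profile equation against the profile $u$ itself, and then to recognize the resulting bulk integral as the dissipation functional $\int_U g(|v|)|v|^2\,dx$ by means of the $g$-Forchheimer momentum law. First I would reduce the denominator of the productivity index. Writing the PSS solution in the separated form \eqref{ph} and using the normalization $\int_{\Gamma_i}\varphi\,d\sigma=0$ together with $u|_{\Gamma_i}=\varphi$ from \eqref{pss-h}--\eqref{spss-phi}, the constant $B$ and the linear-in-time part $-At$ cancel between the two averages, and the pressure drawdown collapses to $\frac1{|U|}\int_U u\,dx$, exactly as recorded in the Remark following \eqref{pss-h}. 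Thus it suffices to express $\int_U u\,dx$ in terms of the flux and the dissipation.

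Next I would test the profile equation \eqref{h-eq}. Since $v=-K(|\nabla u|)\nabla u$, equation \eqref{h-eq} reads $-A=\nabla\cdot(K(|\nabla u|)\nabla u)$, equivalently $\nabla\cdot v=A$ on $U$. Multiplying by $u$, integrating over $U$, and applying Green's formula gives
\[
A\int_U u\,dx=\int_U u\,(\nabla\cdot v)\,dx=\int_{\partial U} u\,(v\cdot N)\,d\sigma-\int_U v\cdot\nabla u\,dx .
\]
The boundary $\partial U=\Gamma_e\cup\Gamma_i$ splits the surface term. On $\Gamma_e$ the Neumann condition \eqref{neumann-h} forces $v\cdot N=0$, so that part drops. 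On $\Gamma_i$ I would work in the infinite-conductivity regime $\varphi\equiv0$, whence $u|_{\Gamma_i}=0$ and the remaining surface term vanishes identically. This boundary step is the one place that demands care, and I expect it to be the main obstacle: for a genuinely non-constant boundary profile $\varphi$ the term $\int_{\Gamma_i}\varphi\,(v\cdot N)\,d\sigma$ need not vanish, so the clean identity is tied to the homogeneous $\varphi\equiv0$ case, which is precisely the situation of practical interest singled out after \eqref{pss-h}.

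With the boundary terms gone, I would invoke the $g$-Forchheimer law \eqref{dafo-g} in the form $\nabla u=\nabla p=-g(|v|)\,v$, so that the surviving integrand becomes $v\cdot\nabla u=-g(|v|)|v|^2$. Substituting back yields the energy identity $A\int_U u\,dx=\int_U g(|v|)|v|^2\,dx$. Finally, using $Q=A|U|$ from \eqref{Poisson} to rewrite the reduced drawdown as $\frac1{|U|}\int_U u\,dx=\frac1{A|U|}\int_U g(|v|)|v|^2\,dx=\frac1{Q}\int_U g(|v|)|v|^2\,dx$, I obtain
\[
PI=\frac{Q}{\frac1{|U|}\int_U u\,dx}=\frac{Q}{\frac1{Q}\int_U g(|v|)|v|^2\,dx}=\frac{Q^2}{\int_U g(|v|)|v|^2\,dx},
\]
which is the first asserted equality. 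The second equality is then immediate: in the GPPC case $g(s)=\sum_{j=0}^k a_j s^{\alpha_j}$, so pointwise $g(|v|)|v|^2=\sum_{j=0}^k a_j|v|^{\alpha_j+2}$, giving the explicit dependence on the $a_j$'s, the $\alpha_j$'s, $|v|$ and $U$. Time-invariance follows because for a PSS solution $Q$, $A$, and the profile $u$ (hence $v=-K(|\nabla u|)\nabla u$) are all independent of $t$.
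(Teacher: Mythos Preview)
Your argument is correct. The energy identity obtained by testing \eqref{h-eq} against the profile $u$, together with the boundary data and the momentum relation $\nabla u=-g(|v|)v$, cleanly yields $A\int_U u\,dx=\int_U g(|v|)|v|^2\,dx$, and the rest is algebra with $Q=A|U|$. You are also right to flag that the boundary step requires the infinite-conductivity assumption $\varphi\equiv 0$ on $\Gamma_i$; without it the surface term $\int_{\Gamma_i}\varphi\,(v\cdot N)\,d\sigma$ survives and the identity no longer gives the clean formula.

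As for comparison: the present paper does not actually supply a proof of this proposition. It is quoted verbatim with the remark that it ``was proved in \cite{SIAM}'' and is recalled only to motivate the solution strategy that follows. So there is no in-paper argument to set yours against. That said, your derivation is the natural one and is presumably what \cite{SIAM} does: the only ingredients available are the profile equation, Green's formula, the boundary conditions, and the constitutive law, and they combine in essentially one way.
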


This result combined with previous theorem (in particularly with Eq. \eqref{velocity} for the velocity)
expresses the fact that the Productivity Index of the well can be evaluated in the following way, provided that condition \eqref{IL} is verified.\\

\textbf{\textit{Solution strategy for the evaluation of the Productivity Index:}}
\begin{enumerate}
\item The factor $\chi$ is selected to generate the scaled domain $\tilde D$.
\item The CMC equation (\ref{CMCta}) is solved for $\tilde{u}(x,y)$
on the domain $\tilde{D}$ for $Q$, with the appropriate boundary conditions.
\item The gradient norm $\xi=|\nabla \tilde{u}|$ is evaluated.
\item The coefficients $a_i, \alpha_i$, $i=0,\cdots,k$ of the GPPC polynomial are selected.
\item The norm of the velocity $|v|$ is explicitely evaluated trough Eq. \eqref{velocity}.
\item The Productivity Index $PI(a_i,\,\alpha_i,\,|v|)$ is evaluated
by using formula (\ref{alternative}).
\end{enumerate}
\begin{remark}
Points 1 and 2  do not depend by the choice of $a_i$ and $\alpha_i$
which means that the evaluation of $\xi$ in 3 is $a_i$ and $\alpha_i$
independent. We need to solve just one BVP. The Productivity
Index can be evaluated a posteriori for any choice $a_i$ and $\alpha_i$.
\end{remark}

\begin{section} {Acknowledgments}
The authors would like to thank Luan Hoang for his stimulating discussions.
This research is supported by NSF Grant No. DMS-0908177.
\end{section}

\end{document}